\newtheorem{thm}{Theorem}
\newtheorem{theorem}{Theorem}[section]
\newtheorem{lemma}[theorem]{Lemma}
\newtheorem{corollary}[theorem]{Corollary}
\newtheorem{proposition}[theorem]{Proposition}
\theoremstyle{definition}
\theoremstyle{remark}
\newtheorem{remark}[theorem]{Remark}
\numberwithin{equation}{section}
\newcommand{\abs}[1]{\lvert#1\rvert}
\begin{document}
\title[Gradient estimates for weighted harmonic function]
{Gradient estimates for weighted harmonic function with Dirichlet boundary condition}

\author{Nguyen Thac Dung}
\address{Department of Mathematics, Vietnam National University (VNU), University of Science,
Hanoi, Vietnam}
\email{dungmath@vnu.edu.vn}

\author{Jia-Yong Wu}
\address{Department of Mathematics, Shanghai University, Shanghai 200444, China}
\email{wujiayong@shu.edu.cn}

\thanks{}
\subjclass[2010]{Primary 58J05; Secondary 35B53}
\dedicatory{}
\date{\today}

\keywords{Smooth metric measure space, Bakry-\'Emery Ricci curvature, manifold with boundary,
harmonic function, gradient estimate, Liouville theorem}

\begin{abstract}
We prove a Yau's type gradient estimate for positive $f$-harmonic functions with
the Dirichlet boundary condition on smooth metric measure spaces with compact boundary
when the infinite dimensional Bakry-Emery Ricci tensor and the weighted mean curvature are
bounded below. As an application, we give a Liouville type result for bounded $f$-harmonic
functions with the Dirichlet boundary condition. Our results do not depend on any assumption
on the potential function $f$.
\end{abstract}
\maketitle

\section{Introduction}\label{Int1}
In this paper, we will give Yau's type gradient estimates for positive $f$-harmonic functions
with the Dirichlet boundary condition (i.e., they are constant on the boundary) on smooth metric measure
spaces with the compact boundary when the infinite dimensional Bakry-\'Emery Ricci tensor and
the weighted mean curvature are bounded below. As an application, we will prove a Liouville
theorem for bounded $f$-harmonic functions with the Dirichlet boundary condition.

Recall that an $n$-dimensional smooth metric measure space denoted by $(M,g,e^{-f}dv_g)$ is an
$n$-dimensional smooth complete Riemannian manifold $(M,g)$ coupled with a weighted volume
$e^{-f}dv_g$ for some $f\in C^\infty(M)$, where $dv_g$ is the standard Riemannian volume
element on $M$ and $f$ is called the potential function. Smooth metric measure spaces are
closely related to gradient Ricci solitons,
the Ricci flow, probability theory, and optimal transport; see e.g. \cite{[BE]}, \cite{[P1]}
and \cite{LV}. On smooth metric measure space $(M,g,e^{-f}dv_g)$, for any $m>0$, the
$m$-Bakry-\'Emery Ricci tensor, introduced by Bakry and \'Emery \cite{[BE]}, is defined by
\[
\mathrm{Ric}_f^m:=\mathrm{Ric}+\mathrm{Hess}\,f-\frac{1}{m}df\otimes df,
\]
where $\mathrm{Ric}$ is the Ricci tensor of the manifold $(M,g)$ and $\mathrm{Hess}$ is the
Hessian with respect to the Riemannian metric $g$. Clearly, $m$-Bakry-\'Emery Ricci tensor is
a natural generalization of Ricci curvature on Riemannian manifolds.

When $m<\infty$, $\mathrm{Ric}_f^m$ is called the finite dimensional Bakry-\'Emery Ricci
tensor and it often shares many similar geometric results for $(n+m)$-dimensional
manifolds with the Ricci tensor; see for example \cite{[LD]}, Appendix A in \cite{[WW]}
and references therein. This is because the Bochner formula for $\mathrm{Ric}_f^m$ can
be considered as the Bochner formula for the Ricci tensor of an $(n+m)$-dimensional
manifold, i.e.,
\begin{equation*}
\begin{split}
\frac 12\Delta_f|\nabla u|^2=&|\nabla^2
u|^2+\langle\nabla\Delta_f u, \nabla u\rangle+\mathrm{Ric}_f^m(\nabla u, \nabla u)
+\frac 1m|\langle\nabla f,\nabla u\rangle|^2\\
\geq& \frac{(\Delta_f u)^2}{m+n}+\langle\nabla\Delta_f u, \nabla
u\rangle+\mathrm{Ric}_f^m(\nabla u, \nabla u)
\end{split}
\end{equation*}
for any $u\in C^\infty(M)$, where $\Delta_f$ is called the $f$-Laplacian, which is defined by
\[
\Delta_f:=\Delta-\nabla f\cdot\nabla.
\]
This operator is a natural generalization of the usual Laplacian and is self-adjoint with respect to the weighted measure $e^{-f}dv_g$.
When $m=\infty$, we have
\[
\mathrm{Ric}_f:=\lim_{m\to\infty}\mathrm{Ric}_f^m=\mathrm{Ric}+\nabla^2 f,
\]
which is called the infinite dimensional Bakry-\'Emery Ricci tensor. When $\mathrm{Ric}_f$ is
bounded below, many geometric properties of manifolds with the Ricci tensor bounded below
were also possibly generalized to smooth metric measure spaces; but some extra assumption
on $f$ is needed, see for example \cite{[WW]}, \cite{[WW1]}, \cite{[WW2]} and references
therein. It is easy to see that $\mathrm{Ric}_f^m\ge c$ implies $\mathrm{Ric}_f\ge c$,
but the opposite may be not true.

In particular, if there exists a real constant $\lambda$ such that
\[
\mathrm{Ric}_f=\lambda g,
\]
then $(M,g,e^{-f}dv_g)$ is called the gradient Ricci soliton.
The gradient Ricci soliton is called shrinking, steady, or expanding, if $\lambda>0$,
$\lambda=0$, or $\lambda<0$, respectively. Gradient Ricci solitons are natural
generalizations of Einstein metrics. They are also self-similar solutions to the
Ricci flow and play important roles in the Ricci flow and Perelman's resolution of
the Poincar\'e conjecture and the geometrization conjecture; see \cite{[Ham]},
\cite{[P1]}, \cite{[P2]}, \cite{[P3]} and references therein for nice details.

On smooth metric measure space $(M,g,e^{-f}dv_g)$, a smooth function $u$ is called
$f$-harmonic (also called weighted harmonic) if
\[
\Delta_f u=0.
\]
On $(M,g,e^{-f}dv_g)$ with compact boundary $\partial M$, the $f$-mean
curvature (also called weighted mean curvature) is defined by
\[
\mathrm{H}_f:=\mathrm{H}-\nabla f\cdot\nu,
\]
where $\nu$ is the unit outer normal vector to $\partial M$ and $\mathrm{H}$ is the mean curvature of
$\partial M$ with respect to $\nu$. When $f$ is constant, the above concepts all recover
the manifold case.

For manifolds with the boundary, most of geometric results concentrate on the Neumann boundary
condition; see for example \cite{Ch}, \cite{WaJ}, \cite{[LY]} and \cite{Ol}. Recently,
Kunikawa and Sakurai \cite{KS20} extended Yau's gradient estimates and Liouville theorems
for harmonic functions \cite{Yau} to the Dirichlet boundary condition. Shortly after, H. Dung,
N. Dung and Wu \cite{DDW} further extended their results to $f$-harmonic functions on the
smooth metric measure space with some compact boundary. In summary, we have
\begin{thm}\label{ThmA}
Let $(M,g,e^{-f}dv)$ be an $n$-dimensional smooth metric measure space with the compact boundary.
For a fixed $m>0$, assume that
\[
{\rm Ric}_f^m\ge-(n+m-1)K\quad \mathrm{and} \quad \mathrm{H}_f\ge-L
\]
for some non-negative constants $K$ and $L$. Let
$u: B_R(\partial M)\to (0, \infty)$ be a positive $f$-harmonic function
with the Dirichlet boundary condition. If $u_\nu$ is non-negative over $\partial M$, then
there exists a constant $c$ depending on $n+m$ such that
\[
\sup\limits_{B_{R/2}(\partial M)}\frac{|\nabla u|}{u}\le
c\left(\frac{1}{R}+L+\sqrt{K}\right).
\]
Here $B_R(\partial M):=\{x\in M|d(x,\partial M)<R\}$.
\end{thm}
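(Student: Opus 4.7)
The plan is to adapt Yau's classical maximum principle argument to the quantity $w:=|\nabla h|^2$ with $h:=\log u$, and to handle the boundary separately by combining the Dirichlet hypothesis with a Hopf-type argument. Since $u>0$ and $\Delta_f u=0$, a direct computation gives $\Delta_f h=-w$. Applying the $m$-dimensional Bochner inequality from the excerpt to $h$, substituting $\Delta_f h=-w$, and using $\mathrm{Ric}_f^m\ge-(n+m-1)K$ produces the workhorse inequality
\[
\Delta_f w\ge\frac{2w^2}{n+m}-2\langle\nabla w,\nabla h\rangle-2(n+m-1)Kw.
\]

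Next, I let $r(x):=d(x,\partial M)$ and pick a standard Yau cutoff $\eta:[0,R]\to[0,1]$ with $\eta\equiv 1$ on $[0,R/2]$, $\eta(R)=0$, $\eta'\le 0$, $\eta'(0)=0$, and the usual bounds $|\eta'|^2\le C\eta/R^2$, $|\eta''|\le C/R^2$. Set $\phi:=\eta\circ r$ and $F:=\phi w$, and let $x_0$ realize the maximum of $F$ on $\overline{B_R(\partial M)}$; non-smoothness of $r$ at the cut locus of $\partial M$ would be circumvented by the usual Calabi trick. Since $F$ vanishes on $\{r=R\}$, either $x_0$ is interior to $B_R(\partial M)$ or $x_0\in\partial M$.

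If $x_0$ is interior, I run the standard computation: $\nabla F(x_0)=0$ gives $\nabla w=-(w/\phi)\nabla\phi$, and $\Delta_f F(x_0)\le 0$ combined with the Bochner inequality, Young's inequality on the $\langle\nabla\phi,\nabla h\rangle$ term, and the weighted Laplacian comparison $\Delta_f r\le L+C(m,n)(\sqrt K+1/r)$ for $r>0$ (derived from $\mathrm{Ric}_f^m\ge-(n+m-1)K$ together with $\mathrm{H}_f\ge-L$ at the foot-point on $\partial M$) yields $F(x_0)\le C(K+1/R^2+\sqrt K/R+L/R)$; AM-GM then gives $\sqrt{F(x_0)}\le c(1/R+\sqrt K+L)$. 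If instead $x_0\in\partial M$, then $\phi(x_0)=1$ and $\phi_\nu(x_0)=-\eta'(0)=0$. Since $u$ is constant on $\partial M$, $\nabla u=u_\nu\nu$ there; writing $\Delta u=u_{\nu\nu}+\mathrm{H}\,u_\nu$ (tangential derivatives of $u$ vanish) and using $\Delta_f u=0$ with $\nabla f\cdot\nabla u=f_\nu u_\nu$ gives the key identity
\[
u_{\nu\nu}=-\mathrm{H}_f\,u_\nu.
\]
Differentiating $w=u_\nu^2/u^2$ in the $\nu$-direction and substituting produces $\partial_\nu w=-2\mathrm{H}_f w-2w^{3/2}$ on $\partial M$, where the hypothesis $u_\nu\ge 0$ ensures the cubic term carries the correct sign. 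The Hopf condition $\partial_\nu F(x_0)\ge 0$, valid because $\nu$ points out of $M$ and $x_0$ maximizes $F$ on $\overline M$, then forces $w(x_0)^{1/2}\le-\mathrm{H}_f(x_0)\le L$, so $F(x_0)\le L^2$. Combining both cases and restricting to $B_{R/2}(\partial M)$, where $\phi\equiv 1$ and hence $F=w$, delivers the stated bound.

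The chief obstacle I expect is the boundary case: the whole Dirichlet estimate hinges on extracting the cubic term $-2w^{3/2}$ with the correct sign from $\partial_\nu w$, which crucially requires both the sign hypothesis $u_\nu\ge 0$ and the identity $u_{\nu\nu}=-\mathrm{H}_f u_\nu$ obtained by coupling the $f$-harmonic equation with the Dirichlet condition. The other delicate ingredient is the weighted Laplacian comparison for $d(\cdot,\partial M)$ that simultaneously absorbs the Bakry-\'Emery Ricci bound and the weighted mean-curvature bound at the foot point; without it, the interior case would not close.
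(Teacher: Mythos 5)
Your proposal is correct, but it is not the argument available in this paper: Theorem A is quoted here from Kunikawa--Sakurai and Dung--Dung--Wu, whose proof (as the introduction remarks) goes through a refined Kato inequality, while the paper's own Section 3 proof treats the weaker hypothesis $\mathrm{Ric}_f\ge-(n-1)K$, where the logarithmic substitution does not close (no $(\Delta_f\cdot)^2/(n+m)$ term in the Bochner formula), and therefore uses Brighton's auxiliary function $h=u^{7/8}$ at the price of the weaker conclusion $|\nabla u|\le c(\cdot)\sup u$. You instead exploit the finite-dimensional hypothesis $\mathrm{Ric}_f^m\ge-(n+m-1)K$ to run Yau's classical scheme on $w=|\nabla\log u|^2$: the $m$-Bochner inequality stated in the introduction yields your workhorse inequality with the key term $\frac{2w^2}{n+m}$, and the interior maximum-point analysis with the Li--Yau cutoff in $\rho=d(\cdot,\partial M)$ works, since the Riccati comparison with initial value $\Delta_f\rho|_{\partial M}=-\mathrm{H}_f\le L$ indeed gives $\Delta_f\rho\le L+C(n+m)\sqrt K$ off the cut locus (your extra $1/r$ term is superfluous but harmless, and it only enters where $\eta'\neq0$, i.e.\ $\rho\ge R/2$); the constant then depends on $n+m$, as claimed. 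Your boundary case is the same mechanism as the paper's Case 2: the identity $u_{\nu\nu}=-\mathrm{H}_f u_\nu$ is exactly what Proposition \ref{Reilly} reduces to under the Dirichlet condition, and together with $\nabla u=u_\nu\nu$, $u_\nu\ge0$, $\phi_\nu(x_0)=0$ (automatic since $\eta\equiv1$ near $0$) and the Hopf inequality $F_\nu(x_0)\ge0$, it gives $w^{1/2}(x_0)\le-\mathrm{H}_f(x_0)\le L$, hence $F(x_0)\le L^2$. Two minor points of rigor: ``differentiating $w=u_\nu^2/u^2$ in the $\nu$-direction'' should be carried out as $(|\nabla u|^2)_\nu=2\,\mathrm{Hess}\,u(\nu,\nabla u)=2u_\nu u_{\nu\nu}$ on $\partial M$ (one cannot normally differentiate the boundary-restricted expression), and the attainment of the maximum of $F$ on the boundary ball deserves the usual remark (work on $B_{R'}(\partial M)$ with $R'<R$ and let $R'\to R$), though the paper glosses over the same point. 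In sum, your route buys the genuinely scale-invariant bound on $|\nabla u|/u$ of Theorem A, which the paper's $u^{7/8}$ trick cannot produce, whereas the paper's method applies under the weaker infinite-dimensional curvature bound at the cost of the $\sup u$ factor.
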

In Theorem \ref{ThmA}, ${\rm Ric}_f^m\ge-(n+m-1)K$ means that the infimum of ${\rm Ric}_f^m$
on the unit tangent bundle on the interior of $M$ is at least $-(n+m-1)K$; $\mathrm{H}_f\ge-L$
means that boundary $\partial M$ has some weak convex property. As pointed out in
\cite{DDW}, when ${\rm Ric}_f\ge-(n-1)K$ and $\mathrm{H}_f\ge-L$, there seems to be essential
obstacles to derive Yau's type gradient estimates by directly following their proof of Theorem
\ref{ThmA} in \cite{KS20} or \cite{DDW}. This is because their proof depends on a refined Kato
inequality, which is not suitable to the case when
${\rm Ric}_f\ge-(n-1)K$.

In this paper, we will solve the above question and give Yau's type gradient estimates
for positive $f$-harmonic functions with the Dirichlet boundary on a neighborhood of the
boundary under lower bounds of ${\rm Ric}_f$ and $\mathrm{H}_f$. Our gradient estimate
does not depend on any assumption on $f$.
\begin{theorem}\label{main}
Let $(M,g,e^{-f}dv)$ be an $n$-dimensional smooth metric measure space with the compact boundary.
Assume that
\[
{\rm Ric}_f\ge-(n-1)K\quad \mathrm{and} \quad \mathrm{H}_f\ge-L
\]
for some non-negative constants $K$ and $L$. Let $u$ be a positive $f$-harmonic function on
$B_R(\partial M)$ with the Dirichlet boundary condition. If $u_\nu\ge 0$ over $\partial M$,
then there exists a constant $c(n)$ depending on $n$ such that
\[
\sup\limits_{B_{R/2}(\partial M)}|\nabla u|\le
c(n)\left(\frac{1}{R}+L+\sqrt{K}\right)\sup_{y\in B_R(\partial M)}u(y).
\]
\end{theorem}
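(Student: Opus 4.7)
The plan is to avoid the refined Kato inequality used in Theorem~\ref{ThmA} (which required the finite-dimensional tensor $\mathrm{Ric}_f^m$) by introducing the auxiliary function
\[
\Phi := |\nabla u|^2 + (n-1)K u^2.
\]
By the Bochner formula applied with $\Delta_f u = 0$ together with $\mathrm{Ric}_f \geq -(n-1)K$, one checks
\[
\Delta_f\Phi \geq 2|\nabla^2 u|^2 \geq 0,
\]
so $\Phi$ is $f$-subharmonic; the $(n-1)K u^2$ correction is designed precisely to absorb the indefinite Ricci term. Since this subharmonicity alone is insufficient to bound $\Phi$ at the maximum, we retain the Hessian through the standard Kato inequality $|\nabla^2 u|^2 \geq |\nabla|\nabla u|^2|^2/(4|\nabla u|^2)$ combined with the identity $\nabla|\nabla u|^2 = \nabla\Phi - 2(n-1)Ku\nabla u$, yielding a quantitative lower bound for $\Delta_f\Phi$ that gives real information at a maximum point.

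First I would localize on $\overline{B_R(\partial M)}$ by a non-increasing radial cutoff $\eta(x) = \phi(d(x,\partial M))$ with $\phi\equiv 1$ on $[0, R/2]$, $\phi\equiv 0$ on $[R, \infty)$, $\phi'\leq 0$, $|\phi'|\leq C/R$, $|\phi''|\leq C/R^2$, and analyze the maximum of $F = \eta^2\Phi$ on $\overline{B_R(\partial M)}$. If the maximum lies at an interior point $x_0$, then $\nabla F(x_0)=0$ (whence $\nabla\Phi = -2\Phi\nabla\eta/\eta$) and $\Delta_f F(x_0)\leq 0$ combine with the Bochner-Kato bound to produce, after Young's inequality absorbs the cross-term $Ku\langle\nabla\eta,\nabla u\rangle/|\nabla u|^2$, the estimate
\[
\Phi(x_0) \leq C(n)\Bigl(\tfrac{|\nabla\eta|^2}{\eta^2} + \tfrac{|\Delta_f\eta|}{\eta} + K\Bigr)(\sup_{B_R} u)^2.
\]
The delicate term $\Delta_f\eta = \phi''(r) + \phi'(r) H_f(r)$ is controlled using the Riccati equation $H_f'(t) = -|S|^2 - \mathrm{Ric}_f(\nu,\nu) \leq (n-1)K$ along geodesics perpendicular to $\partial M$; since $\phi'\leq 0$ and $H_f|_{\partial M}\geq -L$, the product $\phi'(r) H_f(r)\leq L|\phi'(r)|$ without any upper bound on $H_f|_{\partial M}$ being needed, yielding $\Delta_f \eta \leq C/R^2 + CL/R$.

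If instead the maximum lies on $\partial M$, then since $\eta\equiv 1$ in a collar the boundary maximum principle gives $\partial_\nu\Phi(x_0)\geq 0$. Because $u$ is constant on $\partial M$ and $\Delta_f u = 0$, the tangential derivatives of $u$ vanish and the boundary identity $u_{\nu\nu} = -H_f u_\nu$ holds; hence
\[
\partial_\nu\Phi = -2 H_f u_\nu^2 + 2(n-1)K u u_\nu \geq 0.
\]
Combined with $H_f \geq -L$, $u_\nu \geq 0$, and $u_\nu = |\nabla u|$ at $\partial M$, together with a short barrier/Taylor-expansion argument that uses $u \geq 0$ on $B_R(\partial M)$ to force $u_\nu$ to be controlled by a combination of $L\sup u$ and $\sup u/R$, this yields $\Phi(x_0) \leq C(n)(1/R^2 + L^2 + K)(\sup u)^2$. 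Combining both cases and taking square roots gives the claimed estimate.

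The hardest step, I anticipate, is the interior-maximum analysis: the bare subharmonicity $\Delta_f\Phi\geq 0$ by itself only constrains the cutoff and gives no bound on $\Phi$, so one must carefully exploit the Hessian term through the Kato inequality and then neutralize the resulting cross term by Young's inequality, making essential use of the specific form $\Phi = |\nabla u|^2 + (n-1)K u^2$. A secondary obstacle is that, under only the infinite-dimensional curvature bound and no assumption on $f$, the distance function $d(x,\partial M)$ admits only a one-sided Laplacian comparison; this is resolved precisely by the sign $\phi'\leq 0$ of the cutoff, which matches the side of the Riccati inequality available without the refined Kato constant.
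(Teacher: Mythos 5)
Your interior-maximum analysis does not close, and this is a fatal gap rather than a technical one. The function $\Phi=|\nabla u|^2+(n-1)Ku^2$ is indeed $f$-subharmonic (Bochner plus $\Delta_f u^2=2|\nabla u|^2$), but at an interior maximum of $F=\eta^2\Phi$ the only quantitative input you have is $\Delta_f\Phi\ge 2|\nabla^2u|^2\ge\frac{|\nabla|\nabla u|^2|^2}{2|\nabla u|^2}$ together with $\nabla\Phi=-2\Phi\nabla\eta/\eta$. Writing out $0\ge\Delta_fF(x_0)$ gives $\eta^2\Delta_f\Phi\le 6|\nabla\eta|^2\Phi+2\eta|\Delta_f\eta|\Phi$, while Kato with $\nabla|\nabla u|^2=\nabla\Phi-2(n-1)Ku\nabla u$ produces a ``good'' term of size $\frac{2|\nabla\eta|^2\Phi^2}{|\nabla u|^2}$. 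In the dangerous regime $|\nabla u|^2\gg Ku^2$ one has $\Phi\approx|\nabla u|^2$, so this good term is about $2|\nabla\eta|^2\Phi$, which is \emph{smaller} than the bad term $6|\nabla\eta|^2\Phi$; the inequality at the maximum is then vacuous and yields no bound on $F(x_0)$ in terms of $\sup u$. The degenerate case $K=0$ makes this transparent: there $\Phi=|\nabla u|^2$ contains no reference to $u$ at all, and your maximum-point inequality reduces to $2|\nabla\eta|^2\Phi\le 6|\nabla\eta|^2\Phi+2\eta|\Delta_f\eta|\Phi$, a tautology. A Yau-type estimate must couple $|\nabla u|$ to $u$ through a term quartic in the gradient divided by $u^2$; this is exactly what the paper manufactures by applying Bochner not to $u$ but to the nonlinear substitution $h=u^{7/8}$ (Brighton's trick), which yields the key term $\frac{7n-6}{49n}\frac{|\nabla h|^4}{h^2}$ in \eqref{evolu} without any refined Kato inequality or assumption on $f$. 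Your $\Phi$ has no mechanism to generate such a term, so no amount of Young's-inequality bookkeeping will produce $\Phi(x_0)\le C(n)\bigl(\frac{|\nabla\eta|^2}{\eta^2}+\frac{|\Delta_f\eta|}{\eta}+K\bigr)(\sup u)^2$.

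The boundary case is also incomplete. From $\partial_\nu\Phi(x_0)\ge 0$ and your (correct) identity $u_{\nu\nu}=-\mathrm{H}_fu_\nu$ you get $\mathrm{H}_fu_\nu^2\le (n-1)Kuu_\nu$; since $\mathrm{H}_f$ is only bounded below by $-L$ and may be negative, this gives no upper bound on $u_\nu$ whatsoever, and the remaining work is delegated to an unspecified ``barrier/Taylor-expansion argument'' that would in effect be a separate proof of the boundary gradient bound. Compare with the paper's boundary step: applying the Reilly-type identity of Proposition \ref{Reilly} to $h=u^{7/8}$ (not to $u$) at a boundary maximum of $G=\phi|\nabla h|^2$ gives $0\le (|\nabla h|^2)_\nu=2u^{-1/4}u_\nu(-\mathrm{H}_fu_\nu)-\frac14u^{-5/4}u_\nu^3$, whence $\frac{u_\nu}{8u}\le-\mathrm{H}_f\le L$ directly; the extra $-\frac14u^{-5/4}u_\nu^3$ coming from the power substitution is precisely what converts the signless inequality into a bound $u_\nu\le 8Lu$. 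So both halves of your argument are missing the ingredient (the power substitution $u^{7/8}$) that the paper's proof is built around.
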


Notice that our gradient estimate in Theorem \ref{main} holds for
all $R>0$; however Brighton's result (see Theorem 1 in \cite{Br}) in the complete
non-compact case without the boundary only holds for $R>1$. The reason is that in our
case we use a new weighted Laplacian comparison on any neighborhood of the boundary;
see Theorem \ref{Lapcom} in Section \ref{sec2}. Also notice that if $d(x,\partial M)$,
where $x\in M$, is finite, then all boundary balls $B_R(\partial M)$ are possibly the
same if $R$ is large.

Li and Yau \cite{[LY]} proved gradient estimates for the heat equation with the Neumann boundary
condition on compact manifolds of the convex boundary. From the course of our proof, it is easy
to see that our gradient estimate also holds for $f$-harmonic functions with the Neumann boundary condition.

The main trick of proving Theorem \ref{main} stems from the arguments of Brighton \cite{Br}
and Kunikawa and Sakurai \cite{KS20}. Far away from the boundary of space, we
will apply Yau's gradient estimate technique to function $u^{7/8}$ instead of $\ln u$.
On the boundary of space, we will apply a derivative equality (Proposition
\ref{Reilly} in Section \ref{sec2}) to prove the desired estimate.

Taking a limit as $R$ tends to infinity, we immediately get a Liouville type theorem for
$f$-harmonic functions with the Dirichlet boundary condition.
\begin{corollary}\label{Liou}
Let $(M,g,e^{-f}dv)$ be an $n$-dimensional smooth metric measure space with compact boundary $\partial M$.
If ${\rm Ric}_f\ge 0$ and $\mathrm{H}_f\ge 0$, then any bounded $f$-harmonic function $u$ with
the Dirichlet boundary condition and $u_\nu\ge 0$ over $\partial M$ must be constant.
\end{corollary}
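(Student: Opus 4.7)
The plan is to reduce Corollary~\ref{Liou} directly to Theorem~\ref{main} by letting $R\to\infty$, using the boundedness of $u$ to kill the right-hand side. The only mild technicality is that Theorem~\ref{main} requires the function to be strictly positive, while here $u$ is only assumed to be bounded.

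First, I would replace $u$ by $\tilde u := u + C$, where $C$ is a constant chosen large enough that $\tilde u>0$ on $M$ (possible since $u$ is bounded below). Shifting by a constant preserves every hypothesis: $\Delta_f\tilde u=\Delta_f u=0$, so $\tilde u$ is $f$-harmonic; $\tilde u$ is still constant on $\partial M$, so it satisfies the Dirichlet condition; $\tilde u_\nu=u_\nu\ge 0$; and $\tilde u$ remains bounded, say by $M_0:=\sup_M u+C$. Now Theorem~\ref{main} applies to $\tilde u$ with $K=L=0$, giving
\[
\sup_{B_{R/2}(\partial M)}|\nabla u|\;=\;\sup_{B_{R/2}(\partial M)}|\nabla\tilde u|\;\le\;\frac{c(n)}{R}\sup_{B_R(\partial M)}\tilde u\;\le\;\frac{c(n)\,M_0}{R}
\]
for every $R>0$.

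Finally, fix any $x\in M$. Since $d(x,\partial M)$ is finite, there is $R$ so large that $x\in B_{R/2}(\partial M)$, and hence $|\nabla u|(x)\le c(n)M_0/R$. Letting $R\to\infty$ forces $|\nabla u|(x)=0$, and since $x$ was arbitrary, $|\nabla u|\equiv 0$ on $M$. Thus $u$ is locally constant, hence (assuming $M$ connected, as standard) globally equal to its boundary value. I do not anticipate any serious obstacle: the entire content of the corollary has been packaged into Theorem~\ref{main}, and the additive shift is a routine device to meet the positivity hypothesis; the fact that the gradient estimate is valid for \emph{all} $R>0$ (as emphasized in the remark following Theorem~\ref{main}) is exactly what makes this limiting argument work without any auxiliary assumption on $f$.
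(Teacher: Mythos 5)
Your proposal is correct and is essentially the paper's own argument: the corollary is obtained by applying Theorem \ref{main} with $K=L=0$ and letting $R\to\infty$. The additive shift $\tilde u=u+C$ to secure positivity (and your remark on connectedness) are routine refinements that the paper leaves implicit.
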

\begin{remark}
The assumption $u_\nu\ge 0$ over $\partial M$ in Corollary \ref{Liou} is necessary. For example, let
$u(x)=x$ and $f=\mathrm{constant}$ in $M=[1,2]$ with compact boundary $\{1,2\}$.
Then,
\[
{\rm Ric}_f=\mathrm{H}_f=\Delta_fu=0
\]
However, $u_\nu|_{x=1}=-1$ and $u_\nu|_{x=2}=1$. Thus $u(x)$ is a non-constant
bounded $f$-harmonic function with the Dirichlet boundary condition.
\end{remark}

\begin{remark}
The boundness of $f$-harmonic function in Corollary \ref{Liou} is necessary. Indeed, let
$u(x)=-e^{-x}$ and $f(x)=-x$ in $M=(-\infty,0]$ with compact boundary $\{0\}$. Then
the unit outer normal vector $\nu=1$,
\[
{\rm Ric}_f=0,\quad \mathrm{H}_f=1, \quad \Delta_f u=u''+u'=0 \quad  \mathrm{and} \quad u_\nu|_{x=0}=1.
\]
However, $u(x)=-e^{-x}$ is unbounded in $(-\infty,0]$ and it is a non-constant $f$-harmonic
function with the Dirichlet boundary condition.
\end{remark}

\begin{remark}
The assumption $\mathrm{H}_f\ge 0$ in Corollary \ref{Liou} is necessary. We provide two
examples to illustrate it. One example is that, for any real number $\alpha>0$, let
$u(x)=e^{\alpha x}$ and $f(x)=\alpha x$ in $M=(-\infty,0]$ with compact boundary
$\{0\}$. Then $\nu=1$,
\[
{\rm Ric}_f=0, \quad \Delta_f u=u''-\alpha u'=0 \quad  \mathrm{and} \quad u_\nu|_{x=0}=\alpha.
\]
However, $\mathrm{H}_f=-\alpha<0$ and $u(x)$ is a non-constant bounded $f$-harmonic function with
the Dirichlet boundary condition.

Another example is that $u(x)=x^{-1}$ and $f(x)=-2\ln x$ in $M=[1,\infty)$ with
compact boundary $\{1\}$. Then $\nu=-1$,
\[
{\rm Ric}_f=2x^{-2}\ge0, \quad \Delta_fu=0 \quad  \mathrm{and} \quad u_\nu|_{x=1}=1.
\]
However, $\mathrm{H}_f=-2x^{-1}<0$
and $u(x)$ is a non-constant bounded $f$-harmonic function with the Dirichlet
boundary condition.
\end{remark}

The rest of this paper is organized as follows. In Section \ref{sec2}, we will recall
some results about smooth metric measure spaces with the compact boundary, including
the weighted Laplacian comparison, the derivative equality, the Bochner type formula
and the cut-off function. These results will be used in the proof of our gradient
estimate. In Section \ref{sec3}, we will apply the Brighton's trick \cite{Br} and the
Kunikawa-Sakurai's argument \cite{KS20} to prove Theorem \ref{main}.

\textbf{Acknowledgement}.
The authors would like to thank the referee for valuable comments and useful suggestions for this work. The first author is supported by the research project QG.21.01 ``Geometric operators on Riemannian manifolds" of Vietnam National University, Hanoi. The second author is supported by the Natural Science Foundation of Shanghai (17ZR1412800).


\section{Background}\label{sec2}
In this section, we list some known results about smooth metric measure spaces with the
boundary. These results will be used in the proof of our result. For more properties, the
interested reader are referred to \cite{Sakurai17}. On a smooth metric measure space
$(M^n, g, e^{-f}dv)$ with the boundary $\partial M$, the distance function from
the boundary is denoted by
\[
\rho(x)=\rho_{\partial M}(x)=d(x, \partial M),
\]
where $x\in M$.
By \cite{Sa17}, we may assume that $\rho$ is smooth outside of the cut locus for the
boundary. In \cite{WZZ},  Wang, Zhang and Zhou obtained weighted Laplacian comparisons
for the distance function on smooth metric measure spaces with the boundary under some
assumptions on $\rho(x)$ (see also \cite{Saku}). Later, Sakurai \cite{Sakurai17} proved the
following general comparison result without any assumption on $\rho(x)$, which is a key step
in our proof of Theorem \ref{main}.
\begin{theorem}\label{Lapcom}
Let $(M^n, g, e^{-f}dv)$ be an $n$-dimensional complete smooth metric measure space with compact boundary
$\partial M$. Assume that
\[
{\rm Ric}_f\ge-(n-1)K\quad \mathrm{and} \quad \mathrm{H}_f\ge-L
\]
for some constants $K\ge0$ and $L\in\mathbb{R}$. Then
\[
\Delta_f\rho(x)\le (n-1)KR+L
\]
for all $x\in B_R(\partial M)$.
\end{theorem}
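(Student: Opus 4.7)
The plan is to derive a differential inequality for $\Delta_f\rho$ along unit-speed minimizing segments issuing from the boundary and integrate, following the classical Laplacian-comparison strategy. The twist, forced by working with $\mathrm{Ric}_f$ rather than $\mathrm{Ric}_f^m$, is Brighton's observation that one should simply discard the $|\nabla^2\rho|^2$ term in Bochner: this gives a \emph{linear} (rather than Riccati) bound that carries no dimensional contribution from $f$, which is precisely what produces the $(n-1)KR$ form on the right-hand side instead of the usual $\coth$-type expression.

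Fix $x\in B_R(\partial M)$ lying off the cut locus of $\partial M$, and let $\gamma:[0,t]\to M$ be the unit-speed minimizing geodesic from a foot $x_0\in\partial M$ to $x$, so that $\rho(\gamma(s))=s$, $\gamma'(s)=\nabla\rho(\gamma(s))$ and $\gamma'(0)=-\nu(x_0)$. Since $|\nabla\rho|^2\equiv 1$ along $\gamma$, the $m=\infty$ Bochner formula applied to $u=\rho$ reads
\[
0=|\nabla^2\rho|^2+\langle\nabla\Delta_f\rho,\nabla\rho\rangle+\mathrm{Ric}_f(\nabla\rho,\nabla\rho).
\]
Setting $\varphi(s):=\Delta_f\rho(\gamma(s))$, the middle term equals $\varphi'(s)$, so dropping the non-negative Hessian term and using $\mathrm{Ric}_f\ge-(n-1)K$ gives $\varphi'(s)\le(n-1)K$. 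For the initial value, $|\nabla\rho|^2\equiv 1$ forces $\nabla^2\rho(\nabla\rho,\cdot)=0$, hence $\Delta\rho(x_0)=\operatorname{tr}_{T_{x_0}\partial M}\nabla^2\rho$; using $\nabla^2\rho(X,Y)=\langle\nabla_X\nabla\rho,Y\rangle=-\langle\nabla_X\nu,Y\rangle$ for tangential $X,Y$, this trace equals $-\mathrm{H}(x_0)$. Combined with $\langle\nabla f,\nabla\rho\rangle(x_0)=-\langle\nabla f,\nu\rangle(x_0)$ we obtain
\[
\varphi(0)=\Delta_f\rho(x_0)=-\mathrm{H}(x_0)+\langle\nabla f,\nu\rangle(x_0)=-\mathrm{H}_f(x_0)\le L.
\]

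Integrating $\varphi'(s)\le(n-1)K$ from $0$ to $t=\rho(x)\le R$ and adding the initial estimate yields $\varphi(t)\le L+(n-1)KR$, which is the desired bound at smooth points of $\rho$. The only substantive obstacle is the cut locus of $\partial M$, where $\rho$ fails to be $C^2$ and the pointwise argument breaks down; I would handle this in the standard way by a Calabi-type support-function (barrier) argument, upgrading the classical bound to a distributional one that suffices for the application in Theorem \ref{main}. Conceptually, the reason one cannot do better with only a bound on $\mathrm{Ric}_f$ is that without a finite $m$ the Cauchy–Schwarz estimate $|\nabla^2\rho|^2\ge(\Delta\rho)^2/(n-1)$ cannot be converted into a closed Riccati inequality for $\varphi$, so Brighton's trick of throwing the Hessian away is both necessary and, as the proof shows, sufficient.
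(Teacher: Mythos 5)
Your argument is correct, but it takes a different route from the paper: the paper does not reprove the comparison at all, it only sets up the geometry (the foot point $z\in\partial M$ realizing $d(x,\partial M)$, the minimizing geodesic $\gamma_{z,x}$ with $\gamma'_{z,x}(0)$ the inner unit normal, and $d(x,\partial M)\le R$) and then invokes Lemma 6.1 of Sakurai's paper \cite{Sakurai17}, whereas you give a self-contained derivation. Your derivation is essentially the proof that underlies the cited lemma, and all the pieces check out: applying the $m=\infty$ Bochner identity to $\rho$ with $|\nabla\rho|\equiv 1$ gives $\varphi'(s)=\langle\nabla\Delta_f\rho,\nabla\rho\rangle=-|\nabla^2\rho|^2-\mathrm{Ric}_f(\nabla\rho,\nabla\rho)\le (n-1)K$; the boundary value $\Delta_f\rho|_{\partial M}=-\mathrm{H}+\langle\nabla f,\nu\rangle=-\mathrm{H}_f\le L$ is consistent with the paper's sign conventions for $\mathrm{H}_f$ (as one can verify against the one-dimensional examples in the remarks); and integrating over $[0,\rho(x)]$ with $\rho(x)\le R$, $K\ge 0$ gives the stated linear bound. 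Your handling of the cut locus by a Calabi-type barrier is only sketched, but it matches how the estimate is actually used in the proof of Theorem \ref{main}, where the maximum point is assumed off $\mathrm{Cut}(\partial M)$ by Calabi's argument, so nothing essential is missing. What each approach buys: the paper's citation is shorter and delegates the technical cut-locus/focal-point issues to \cite{Sakurai17}; your direct proof makes the result self-contained and makes transparent the key structural point of the paper, namely that with only $\mathrm{Ric}_f$ (no finite $m$ and no assumption on $f$) the Hessian term cannot be closed into a Riccati inequality, so one discards it and obtains the linear bound $(n-1)KR+L$ rather than a $\coth$-type comparison.
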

\begin{proof}[Proof of Theorem \ref{Lapcom}]
We will give a quick explanation of the result based on Sakurai's result \cite{Sakurai17}.
Assume that $x\in B_{R}(\partial M)$. Let $B(x, d(x, \partial M))\subset M$ be the largest geodesic ball
with center $x$ such that $\partial B(x, d(x, \partial M))\cap \partial M=z$. We also let
$\gamma_{z, x}(s)$ be a geodesic line with the arc-length parameter $s$ which starts from point
$z$ to $x$. It is easy to see that $\gamma'_{z,x}(0)$ is the unit inner normal vector for $\partial M$ at $z$.
Since $\gamma_{z,x}(d(x, \partial M))=x$ and $d(x, \partial M)\leq R$, by Lemma 6.1 in \cite{Sakurai17},
we easily get Theorem \ref{Lapcom}. We remark that the above statement can be described below; see Figure 1.

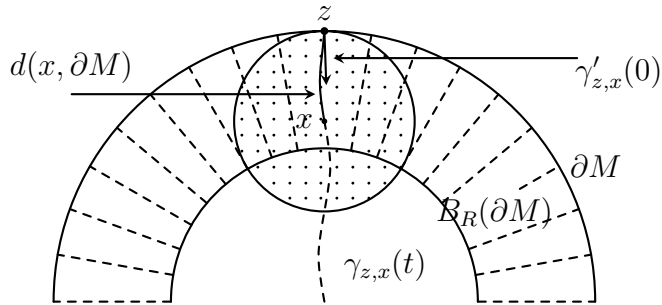
\begin{figure}[htb]
\begin{tikzpicture}[>=stealth,line join=round,line cap=round,thick]
	\tikzset{label style/.style={font=\footnotesize},scale=1.2}
	\fill[pattern=my dots](90:2) circle (1) ;
	\draw (1.7,0) arc (0:180:1.7)	(3,0) arc (0:180:3)
	(90:2) circle (1)	(0,2)..controls +(100:0.5) and +(-95:0.5)..(0,3);
	\draw[->] (0,3)--(89.5:2.4);
	\draw[<-] (0.1,2.7)--(2.8,2.7)node[below right=-5pt]{$\gamma'_{z,x}(0)$};
	\draw[->] (-2.8,2.3)node[above]{$d(x,\partial M)$} --(-0.1,2.3);	
	\draw[dashed]
	(0,0) ..controls +(100:0.5) and +(-100:0.5)..(0,1)
	..controls +(80:0.5) and +(-80:0.5)..(0,2);	
	\filldraw[black] (90:3) circle (0.9pt) node[above] {$z$};
	\fill[black]	(90:2) circle (0.9pt)  node[left]{$x$};
	\draw (30:3) node[right]{$\partial M$}
	(26:3) node[below left=1pt] {$B_R(\partial M)$}
	(0,0) node[above right=3pt]{$\gamma_{z,x}(t)$};	
	\foreach \i in {0,10,20,40,50,60,70,80,100,110,120,130,140,150,160,170,180}
    \draw[dashed] (\i:1.7)--(\i:3);
	\foreach \i in {30} \draw[dashed] (\i:2.5)--(\i:3);
\end{tikzpicture}
\caption{Comparison on weighted manifolds with boundary}
\end{figure}
\end{proof}
Next, we recall the following derivative equality, which was ever used in the proof of the weighted
Reilly formula in \cite{[MD]}; see also (24) in Appendix of \cite{[CMZ]}. In fact it is
a slight generalization of the classical case \cite{Rei}. This formula will be used in our
gradient estimate for the boundary of the weighted manifold.
\begin{proposition}\label{Reilly}
Let $(M, g, e^{-f}dv)$ be a complete smooth metric measure space with compact boundary
$\partial M$. For any $u\in C^\infty(M)$, we have
\begin{equation*}
\begin{aligned}
\frac{1}{2}\left(|\nabla u|^2\right)_\nu&=u_\nu\left[\Delta_f u-\Delta_{\partial M, f}(u|_{\partial M})
-\mathrm{H}_f u_\nu\right]
+g_{\partial M}(\nabla_{\partial M}(u|_{\partial M}), \nabla_{\partial M}u_\nu)\\
&\quad-{\rm II}(\nabla_{\partial M}(u|_{\partial M}), \nabla_{\partial M}(u|_{\partial M})),
\end{aligned}
\end{equation*}
where $\nu$ is the outer unit normal vector to $\partial M$, and ${\rm II}$ is the second
fundamental form of $\partial M$ with respect to $\nu$.
\end{proposition}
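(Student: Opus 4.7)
The proof is a pointwise calculation at a boundary point, leveraging the decomposition of $\nabla u$ into tangential and normal parts. The plan is as follows.

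First I would rewrite the left-hand side via the Hessian identity $\tfrac{1}{2}\nabla|\nabla u|^2=\nabla_{\nabla u}\nabla u$, giving
\[
\tfrac{1}{2}(|\nabla u|^2)_\nu=\mathrm{Hess}\,u(\nu,\nabla u).
\]
On $\partial M$ I then decompose $\nabla u=X+u_\nu\nu$ with $X:=\nabla_{\partial M}(u|_{\partial M})$, so that
\[
\tfrac12(|\nabla u|^2)_\nu=\mathrm{Hess}\,u(\nu,X)+u_\nu\,\mathrm{Hess}\,u(\nu,\nu).
\]
Each term will be treated separately.

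For the mixed term, I use the symmetry of $\mathrm{Hess}$ and the tensorial definition
$\mathrm{Hess}\,u(X,\nu)=X(\nu u)-(\nabla_X\nu)u$. The first piece equals $X(u_\nu)=g_{\partial M}(X,\nabla_{\partial M}u_\nu)$, since $u_\nu$ restricts to a smooth function on $\partial M$ and $X$ is tangential. For the second piece I split $\nabla u$ into tangential and normal parts again; since $\langle\nabla_X\nu,\nu\rangle=\tfrac12 X|\nu|^2=0$, only the tangential component contributes, yielding $\langle\nabla_X\nu,X\rangle=\mathrm{II}(X,X)$ with the convention $\mathrm{II}(Y,Z)=\langle\nabla_Y\nu,Z\rangle$. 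Hence
\[
\mathrm{Hess}\,u(\nu,X)=g_{\partial M}(\nabla_{\partial M}(u|_{\partial M}),\nabla_{\partial M}u_\nu)-\mathrm{II}(X,X).
\]

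For the normal-normal term, I pick an orthonormal frame $\{e_1,\ldots,e_{n-1},\nu\}$ at a boundary point with $e_i$ tangent to $\partial M$, and write $\Delta u=\sum_i\mathrm{Hess}\,u(e_i,e_i)+\mathrm{Hess}\,u(\nu,\nu)$. Using the Gauss decomposition $\nabla_{e_i}e_i=\nabla^{\partial M}_{e_i}e_i-\mathrm{II}(e_i,e_i)\nu$, a short computation gives $\sum_i\mathrm{Hess}\,u(e_i,e_i)=\Delta_{\partial M}(u|_{\partial M})+Hu_\nu$, so
\[
\mathrm{Hess}\,u(\nu,\nu)=\Delta u-\Delta_{\partial M}(u|_{\partial M})-Hu_\nu.
\]
Subtracting $\langle\nabla f,\nabla u\rangle=\langle\nabla_{\partial M}f,\nabla_{\partial M}u\rangle+f_\nu u_\nu$ and rearranging converts Laplacians to $f$-Laplacians and $H$ to $H_f=H-f_\nu$, producing
\[
\mathrm{Hess}\,u(\nu,\nu)=\Delta_fu-\Delta_{\partial M,f}(u|_{\partial M})-H_fu_\nu.
\]
Substituting the two boxed expressions into the decomposition of $\tfrac12(|\nabla u|^2)_\nu$ yields the claimed identity.

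The only nontrivial point is bookkeeping of signs in the Gauss formula and in the definition of $H_f$; if the convention for $\mathrm{II}$ is chosen so that $\mathrm{II}$ is positive on a Euclidean ball with outward normal (equivalently $H=\mathrm{tr}\,\mathrm{II}>0$ for convex boundary), then all signs align. Once the orientation is fixed, the remaining computation is purely algebraic and requires no further geometric input—in particular no use of the Bakry–Émery curvature bound—so this proposition is genuinely a weighted version of Reilly's classical pointwise identity.
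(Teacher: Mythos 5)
Your argument is correct and is essentially the paper's own proof: both start from $\tfrac12(|\nabla u|^2)_\nu=\mathrm{Hess}\,u(\nu,\nabla u)$, split $\nabla u$ into tangential and normal parts along $\partial M$, identify the mixed term with $g_{\partial M}(\nabla_{\partial M}u,\nabla_{\partial M}u_\nu)-\mathrm{II}(\nabla_{\partial M}u,\nabla_{\partial M}u)$, and replace $\mathrm{Hess}\,u(\nu,\nu)$ by $\Delta u-\Delta_{\partial M}(u|_{\partial M})-\mathrm{H}u_\nu$ before passing to the weighted quantities. The only difference is notational (tensorial Hessian and an orthonormal frame versus the paper's covariant-derivative bookkeeping), so no further comment is needed.
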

\begin{proof}[Proof of Proposition \ref{Reilly}]
Its proof follows by a direct computation and we include it for the sake of completeness.
We compute that
\begin{equation*}
\begin{aligned}
\frac{1}{2}\left(|\nabla u|^2\right)_\nu&=g(\nabla_{\nu}\nabla u,\nabla u)\\
&=g(\nabla_{\nabla u}\nabla u,\nu)\\
&=g(\nabla_{\nu}\nabla u,\nu)u_{\nu}+g(\nabla_{\nabla_{\partial M} u}\nabla u,\nu)\\
&=g(\nabla_{\nu}\nabla u,\nu)u_{\nu}+g_{\partial M}(\nabla_{\partial M}u, \nabla_{\partial M}{u_\nu})
-g(\nabla u,\nabla_{\nabla_{\partial M}u}\nu).
\end{aligned}
\end{equation*}
Thus,
\begin{equation*}
\begin{aligned}
\frac{1}{2}&\left(|\nabla u|^2\right)_\nu-(\Delta_f u)u_{\nu}\\
&=\big[g(\nabla_{\nu}\nabla u,\nu)-\Delta u+g(\nabla f,\nabla u)\big]u_{\nu}
+g_{\partial M}(\nabla_{\partial M}u, \nabla_{\partial M}{u_\nu})
-g(\nabla_{\partial M} u,\nabla_{\nabla_{\partial M}u}\nu)\\
&=\big[-\Delta_{\partial M}(u|_{\partial M})-\mathrm{H}u_{\nu}+g_{\partial M}(\nabla_{\partial M} (f|_{\partial M}),\nabla_{\partial M}(u|_{\partial M}))+g(\nabla f,\nu)u_{\nu}\big]u_{\nu}\\
&\quad+g_{\partial M}(\nabla_{\partial M}u, \nabla_{\partial M}{u_\nu})
-{\rm II}(\nabla_{\partial M}(u|_{\partial M}), \nabla_{\partial M}(u|_{\partial M}))
\end{aligned}
\end{equation*}
and the desired result follows.
\end{proof}

Meanwhile, we recall an important Bochner type formula in the proof of our result, which was
proved by Brighton; see (2.10) in \cite{Br}. One important step of his proof is examining
two cases depending on the relative magnitudes between $\langle\nabla h,\nabla f\rangle$
and $\frac{|\nabla h|^2}{2h}$, where $h:=u^{7/8}$ and $u$ is a positive $f$-harmonic function.
We mention that this formula always holds without any assumption on $f$.
\begin{lemma}
Let $(M^n, g, e^{-f}dv)$ be an $n$-dimensional smooth metric measure space with
\[
{\rm Ric}_f\ge-(n-1)K
\]
for some constant $K\ge0$. If $u$ is a positive $f$-harmonic function on $M$, then
function $h:=u^{7/8}$ satisfies
\begin{equation}\label{evolu}
\frac 12\Delta_f|\nabla h|^2 \ge\frac{7n-6}{49nh^2}|\nabla h|^4-\frac{1}{7h}\langle\nabla h,\nabla|\nabla h|^2\rangle-(n-1)K|\nabla h|^2.
\end{equation}
\end{lemma}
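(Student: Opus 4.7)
The plan is to apply the $f$-Bochner formula to $h = u^{7/8}$ and push the consequences of $\Delta_f u = 0$ through the chain rule. First, differentiating gives $\nabla h = \tfrac{7}{8}u^{-1/8}\nabla u$ and $|\nabla h|^{2} = \tfrac{49}{64}u^{-1/4}|\nabla u|^{2}$. Computing the $f$-Laplacian of $h$ and using $\Delta_f u = 0$ I get
\[
\Delta_f h \;=\; \tfrac{7}{8}u^{-1/8}\Delta_f u \;-\; \tfrac{7}{64}u^{-9/8}|\nabla u|^{2} \;=\; -\tfrac{7}{64}u^{-9/8}|\nabla u|^{2},
\]
and re-expressing $|\nabla u|^{2}$ through $|\nabla h|^{2}$ yields the key identity $\Delta_f h = -|\nabla h|^{2}/(7h)$.

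Second, I invoke the $f$-Bochner formula
\[
\tfrac{1}{2}\Delta_f|\nabla h|^{2} \;=\; |\nabla^{2}h|^{2} \;+\; \langle\nabla\Delta_f h,\nabla h\rangle \;+\; {\rm Ric}_f(\nabla h,\nabla h).
\]
Differentiating the identity for $\Delta_f h$ and pairing with $\nabla h$ gives, via the product rule,
\[
\langle\nabla\Delta_f h,\nabla h\rangle \;=\; -\frac{1}{7h}\langle\nabla h,\nabla|\nabla h|^{2}\rangle \;+\; \frac{|\nabla h|^{4}}{7h^{2}},
\]
which already produces the mixed term appearing in the statement. The hypothesis ${\rm Ric}_f \ge -(n-1)K\,g$ supplies the $-(n-1)K|\nabla h|^{2}$ contribution on the right-hand side.

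Third, I must bound $|\nabla^{2}h|^{2}$ from below. The natural tool is the Cauchy--Schwarz inequality $|\nabla^{2}h|^{2}\ge(\Delta h)^{2}/n$, combined with $\Delta h = \Delta_f h + \langle\nabla f,\nabla h\rangle = -\tfrac{|\nabla h|^{2}}{7h} + \langle\nabla f,\nabla h\rangle$. The cross term $\langle\nabla f,\nabla h\rangle$ in the square is the chief obstacle, since no hypothesis on $f$ is available. Following Brighton's trick, I split along the threshold $\langle\nabla f,\nabla h\rangle \gtrless |\nabla h|^{2}/(2h)$. When $\langle\nabla f,\nabla h\rangle \ge |\nabla h|^{2}/(2h)$, one has $\Delta h \ge 5|\nabla h|^{2}/(14h)$, so $(\Delta h)^{2}/n \ge 25|\nabla h|^{4}/(196nh^{2})$, and the combined coefficient $\tfrac{25}{196n}+\tfrac{1}{7}$ exceeds $\tfrac{7n-6}{49n}$; in the opposite case I simply discard $|\nabla^{2}h|^{2}\ge 0$ and use the arithmetic fact $\tfrac{1}{7} > \tfrac{7n-6}{49n}$. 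Either way, the required lower bound $|\nabla^{2}h|^{2} + |\nabla h|^{4}/(7h^{2}) \ge \tfrac{7n-6}{49n}\,|\nabla h|^{4}/h^{2}$ holds, which together with the Bochner computation and the curvature bound gives \eqref{evolu}. The entire argument avoids any direct assumption on $f$ because $\langle\nabla f,\nabla h\rangle$ only appears through the controllable combination $\Delta h$.
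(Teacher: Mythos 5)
Your argument is correct and is essentially the proof the paper relies on: the paper does not prove this lemma itself but quotes it as Brighton's inequality (2.10) in \cite{Br}, whose derivation is exactly what you carry out — the weighted Bochner formula applied to $h=u^{7/8}$, the identity $\Delta_f h=-\tfrac{|\nabla h|^2}{7h}$ obtained from $\Delta_f u=0$, and Brighton's two-case comparison of $\langle\nabla f,\nabla h\rangle$ with $\tfrac{|\nabla h|^2}{2h}$ to control the Hessian term. A minor observation: since your exact computation of $\langle\nabla\Delta_f h,\nabla h\rangle$ already yields the term $\tfrac{|\nabla h|^4}{7h^2}$ and $\tfrac{1}{7}>\tfrac{7n-6}{49n}$, discarding $|\nabla^2 h|^2\ge 0$ suffices in both cases, so the case split is not strictly needed for the coefficient stated in \eqref{evolu}.
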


In the end, we introduce a well-known cut-off function originated by Li and Yau \cite{[LY]}.
Here we may adopt the statements of \cite{[SZ]} and \cite{KS20}. The cut-off function
is an important tool in our proof of Yau's type gradient estimates.
	\begin{lemma}\label{lemmcut}
Let $(M^n, g, e^{-f}dv)$ be an $n$-dimensional complete smooth metric measure space
with compact boundary $\partial M$. There exists a smooth cut-off function $\phi=\phi(x)$
supported in $B_R(\partial M)$ such that
\begin{itemize}
\item[(i)] $\phi =\phi(\rho_{\partial M}(x))\equiv \phi(\rho)$;\,
$\phi(\rho)=1$ in $B_{R/2}(\partial M)$,\quad $0\le\phi\le 1$.

\item[(ii)] $\phi$ is decreasing as a radial function of parameter $\rho$.

\item[(iii)]$\left|\frac{\partial\phi}{\partial \rho}\right|\le \frac{C_{\varepsilon}{\phi^{\varepsilon }}}{R}$
and $\left| \frac{{\partial}^2\phi}{{\partial}{\rho^2}}\right|\le\frac{C_{\varepsilon}\phi^{\varepsilon}}{R^2}$,
\quad $0<\varepsilon <1$.
		\end{itemize}
	\end{lemma}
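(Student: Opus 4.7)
The plan is to build $\phi$ by composing a one-dimensional bump function with the distance to the boundary. I would first fix a smooth non-increasing auxiliary function $\eta_0\colon[0,\infty)\to[0,1]$ that is $\equiv 1$ on $[0,1/2]$, $\equiv 0$ on $[1,\infty)$, with the usual uniform bounds on $|\eta_0'|$ and $|\eta_0''|$. For a given $\varepsilon\in(0,1)$ I set $\eta:=\eta_0^{m}$ with $m\ge 2/(1-\varepsilon)$ and then define $\phi(x):=\eta\bigl(\rho_{\partial M}(x)/R\bigr)$. By construction $\phi$ depends on $x$ only through $\rho$, equals $1$ on $B_{R/2}(\partial M)$, and vanishes outside $B_R(\partial M)$, and it is monotone non-increasing in $\rho$; thus (i) and (ii) are immediate.

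For (iii) I would carry out the standard chain-rule bookkeeping. One has
\[
\eta'=m\eta_0^{m-1}\eta_0',\qquad \eta''=m(m-1)\eta_0^{m-2}(\eta_0')^2+m\eta_0^{m-1}\eta_0''.
\]
Writing $\eta_0^{m-1}=\eta^{(m-1)/m}$ and $\eta_0^{m-2}=\eta^{(m-2)/m}$, the choice $m\ge 2/(1-\varepsilon)$ forces both exponents to exceed $\varepsilon$, so that $\eta_0^{m-1},\eta_0^{m-2}\le\eta^{\varepsilon}$ wherever $\eta_0\le 1$. Combined with the uniform bounds on $|\eta_0'|$ and $|\eta_0''|$, this yields $|\eta'|,|\eta''|\le C_\varepsilon\,\eta^{\varepsilon}$. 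The chain rule then produces the factors $R^{-1}$ and $R^{-2}$ claimed in (iii), since $\partial_\rho\phi=R^{-1}\eta'(\rho/R)$ and $\partial_\rho^2\phi=R^{-2}\eta''(\rho/R)$.

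The only real subtlety is that $\rho_{\partial M}$ is merely Lipschitz on $M$ and is smooth only off the cut locus of $\partial M$; strictly speaking $\phi$ is therefore not $C^\infty$ on all of $B_R(\partial M)$. I would handle this the same way as in \cite{[SZ]} and \cite{KS20}: this cut-off is inserted only into a maximum-principle argument in the proof of Theorem \ref{main}, and Theorem \ref{Lapcom} supplies an upper bound for $\Delta_f\rho$ in the barrier sense throughout $B_R(\partial M)$. Invoking Calabi's trick, either by perturbing the base point so as to move off the cut locus or by replacing $\rho$ with an inner smoothing, legitimizes treating $\phi$ as smooth without changing any of the constants in a substantive way. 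I expect this to be the only conceptual obstacle; the derivative estimates in the preceding two paragraphs are routine.
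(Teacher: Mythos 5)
Lemma \ref{lemmcut} is not actually proved in the paper: it is quoted from Li--Yau \cite{[LY]} in the form stated in \cite{[SZ]} and \cite{KS20}. Your construction is the standard one behind those references, and your bookkeeping is correct: with $\eta=\eta_0^{m}$ and $m\ge 2/(1-\varepsilon)$ one has $(m-1)/m\ge (m-2)/m\ge\varepsilon$, hence $\eta_0^{m-1},\,\eta_0^{m-2}\le \eta^{\varepsilon}$ since $0\le\eta\le1$, so $|\eta'|,|\eta''|\le C_\varepsilon\eta^{\varepsilon}$, and the chain rule produces the factors $R^{-1}$ and $R^{-2}$; properties (i) and (ii) are immediate. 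Two small points. First, take $m$ to be an integer (say $m=\lceil 2/(1-\varepsilon)\rceil$), so that $\eta_0^{m}$ is manifestly smooth at the edge of its support; for a non-integer power of a nonnegative flat function smoothness is not automatic. Second, as literally stated the lemma provides a single $\phi$ satisfying (iii) for every $\varepsilon\in(0,1)$, whereas your $\phi$ depends on $\varepsilon$ and satisfies (iii) only for exponents up to $(m-2)/m$. This is immaterial for the paper, since the proof of Theorem \ref{main} uses only a fixed exponent (effectively $\varepsilon=1/2$ through $|\nabla\phi|^{2}/\phi\le c^{2}/R^{2}$, together with the crude bounds $|\phi'|\le c/R$ and $|\phi''|\le c/R^{2}$ entering \eqref{dela}); if you want the statement verbatim, replace the polynomial profile by one that is flat to infinite order at $t=1$ (an $e^{-1/(1-t)}$-type bump), for which $|\eta'|,|\eta''|\le C_\varepsilon\eta^{\varepsilon}$ holds for all $\varepsilon\in(0,1)$ simultaneously. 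Finally, your caveat about the non-smoothness of $\rho_{\partial M}$ across the cut locus is handled exactly as the paper uses the cut-off: $\phi\equiv1$ on $B_{R/2}(\partial M)$, Theorem \ref{Lapcom} supplies the comparison in the barrier sense, and Case 1 of the proof of Theorem \ref{main} invokes Calabi's argument at the maximum point, so your treatment is consistent with the paper and no further repair is needed.
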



\section{Gradient estimate}\label{sec3}
In this section, we will combine the arguments of Kunikawa-Sakurai \cite{KS20}
and Brighton \cite{Br} to prove our result.

\begin{proof}[Proof of Theorem \ref{main}]
Let $u$ be a positive $f$-harmonic function on $M$ and let $h:=u^{7/8}$. We then
consider function
\[
G:=\phi|\nabla h|^2,
\]
where $\phi$ is a smooth cut-off function supported in
$B_R(\partial M)$ introduced in Lemma \ref{lemmcut}.  We would like to point out that
if $\rho$ is finite, then all boundary balls $B_R(\partial M)$ are possibly the
same if $R$ is large enough. We now compute that
\[
\nabla G=|\nabla h|^2\cdot\nabla\phi+\phi\cdot\nabla|\nabla h|^2.
\]
Hence
\begin{equation} \label{gradsquared}
\abs{\nabla h}^2 = \frac{G}{\phi}
\quad \text{and} \quad
\nabla \abs{\nabla h}^2 = \frac{\nabla G}{\phi} - \frac{\nabla \phi}{\phi ^2}G.
\end{equation}
We further compute that
\begin{equation*}
\begin{aligned}
\Delta_f G&=\Delta\phi|\nabla h|^2+2\langle\nabla\phi,\nabla|\nabla h|^2\rangle
+\phi\Delta|\nabla h|^2-\langle\nabla f,\nabla\phi\rangle|\nabla h|^2-\phi\langle\nabla f,\nabla|\nabla h|^2\rangle\\
&=\Delta\phi|\nabla h|^2+2\langle\nabla\phi,\nabla|\nabla h|^2\rangle
-\langle\nabla f,\nabla \phi\rangle|\nabla h|^2+\phi\Delta_f|\nabla h|^2.
\end{aligned}
\end{equation*}
From this, we get that
\begin{equation}
\begin{aligned}\label{flaplace}
\Delta_f|\nabla h|^2&=\frac{1}{\phi}\Delta_f G-\frac{|\nabla h|^2}{\phi}\Delta_f\phi- 2\left\langle\frac{\nabla \phi}{\phi},\nabla\abs{\nabla h}^2\right\rangle\\
&=\frac{1}{\phi}\Delta_f G-\frac{G}{\phi^2}\Delta_f\phi
-2\left\langle\frac{\nabla\phi}{\phi},\frac{\nabla G}{\phi}-\frac{\nabla\phi}{\phi^2}G\right\rangle.
\end{aligned}
\end{equation}
Then substituting \eqref{gradsquared}  and \eqref{flaplace} into \eqref{evolu} and
solving for $\frac{1}{\phi} \Delta_f G$, we obtain
\begin{equation}
\begin{aligned}\label{estima}
\frac{1}{\phi}\Delta_fG&\ge\frac{G}{\phi^2}\Delta_f\phi+2\left\langle\frac{\nabla\phi}{\phi},\frac{\nabla G}{\phi}\right\rangle
-2\left\langle\frac{\nabla \phi}{\phi},\frac{\nabla \phi}{\phi^2}G\right\rangle
+\left(\frac{14n-12}{49nh^2}\right) \frac{G^2}{\phi^2}\\
&\quad-\frac{2}{7h}\left\langle\nabla h,\frac{\nabla G}{\phi}-\frac{\nabla\phi}{\phi^2}G\right\rangle-2(n-1)K\frac{G}{\phi}.
\end{aligned}
\end{equation}

Now we assume that $G$ obtains its maximal value at $x_1\in B_R(\partial M)$. We will
prove the desired estimate according to two cases: $x_1\in B_R(\partial M)\setminus\partial M$
and $x_1\in\partial M$.

\textbf{Case 1: }If $x_1\in B_R(\partial M)\setminus\partial M$, we
may assume that $x_1\notin {\rm Cut}(\partial M)$ by the Calabi's argument.
At $x_1$, we have
\[
\nabla G=0\quad \mathrm{and}\quad \Delta_fG\le0.
\]
Consequently, at $x_1$, \eqref{estima} becomes
\begin{equation}\label{almostdone}
\left(\frac{14n-12}{49nh^2}\right)G\le-\Delta_f\phi+\frac{2}{\phi}|\nabla\phi|^2
-\frac{2}{7h}\langle\nabla h,\nabla\phi\rangle+2(n-1)\phi K.
\end{equation}
Since  ${\rm Ric}_f\ge-(n-1)K$ and $H_f\ge-L$,
by the weighted Laplacian comparison of Theorem \ref{Lapcom}, we have that
\begin{equation}
\begin{aligned}\label{dela}
\Delta_f\phi&\ge\frac{-c}{R}\Delta_f(\rho)-\frac{c}{R^2}\\
&\ge-c(n-1)K-\frac{-cL}{R}-\frac{c}{R^2}.
\end{aligned}
\end{equation}
For any $\delta\ge 0$, the term $\langle\nabla h, \nabla \phi\rangle$ can be estimated by
\begin{equation}
\begin{aligned}\label{cterm}
2\abs{\langle\nabla h,\nabla \phi\rangle}&\le 2\abs{\nabla h} \abs{\nabla \phi}\\
 &\le \frac{\delta \phi}{h}|\nabla h|^2+\frac{h}{\delta\phi}|\nabla\phi|^2.
\end{aligned}
\end{equation}
Substituting \eqref{dela} and \eqref{cterm} into \eqref{almostdone}, we have that
at $x_1$,
\begin{equation}
\left(\frac{14n-12}{49nh^2}-\frac{\delta}{7h^2}\right)G\le\frac{c L}{R}+\frac{c}{R^2}
+\left(2+\frac{1}{7\delta}\right)\frac{|\nabla\phi|^2}{\phi}+2(n-1)K(\phi+c) .
\end{equation}
Now choosing $\delta=\frac{1}{7}$, and using
\[
\frac{\abs{\nabla \phi}^2}{\phi} \leq \frac{c^2}{R^2},
\]
we get that
\begin{equation}
\begin{aligned}\label{maxest}
\frac{13n-12}{49nh(x_1)^2}G(x_1)&\le\frac{cL}{R}+\frac{c+3c^2}{R^2}+2(n-1)K(\phi(x_1)+c)\\
&\le\frac{cL}{R}+\frac{c+3c^2}{R^2}+2(n-1)K(1+c).
\end{aligned}
\end{equation}
Notice that
\[
\sup\limits_{B_{R/2}(\partial M)}G(x)\le G(x_1).
\]
Combining this with \eqref{maxest} and using the definition of $h$,  we conclude that
\[
\sup\limits_{B_{R/2}(\partial M)}|\nabla u|\le \sqrt{\frac{64n(cLR+c+3c^2)}{13n-12}\frac{1}{R^2} +\frac{128n(n-1)(1+ c)}{13n-12}K} \sup_{y\in B_R(\partial M)}u(y).
\]
So the desired result follows by using the Cauchy-Schwarz inequality
\[
2\sqrt{\frac{L}{R}}\le \frac{1}{R}+L.
\]

\textbf{Case 2:} If maximal point $x_1\in\partial M$, our gradient estimate still holds by adapting
the argument of \cite{KS20}. Indeed, at $x_1$, we have $G_{\nu}\ge0$. Since $\phi(x_1)=1$, then
\[
(|\nabla h|^2)_{\nu}\ge 0.
\]
Here we notice that the Dirichlet boundary condition for $u$ and the assumption $u_\nu\geq0$
imply that
\[
|\nabla u|=u_\nu.
\]
Since $h=u^{7/8}$ and $u$ satisfies the Dirichlet boundary
condition, by Proposition \ref{Reilly}, we have
\begin{align*}
0\le\frac{64}{49}\left(|\nabla h|^2\right)_\nu&=(u^{-1/4}|\nabla u|^2)_{\nu}\\
&=u^{-1/4}(|\nabla u|^2)_\nu-\frac 14u^{-5/4}|\nabla u|^2 u_{\nu}\\
&=2u^{-1/4}u_\nu(\Delta_f u-\mathrm{H}_f u_\nu)-\frac 14u^{-5/4}u^3_{\nu},
\end{align*}
where we used $|\nabla u|=u_\nu$ in the last line.
Since $u$ is a positive $f$-harmonic function, the above inequality reduces to
\[
\frac{u_\nu}{8u}+\mathrm{H}_f\le 0.
\]
By our theorem assumption, this implies
\[
\frac{u_\nu}{8u}\le-\mathrm{H}_f\le L.
\]
Hence,
\begin{equation*}
\begin{aligned}
G(x_1)=|\nabla h|^2(x_1)&=\frac{49}{64}u^{-1/4}(x_1)u^2_{\nu}\\
&=49u^{7/4}(x_1)\left(\frac{u_\nu}{8u}\right)^2\\
&\le 49u^{7/4}(x_1)L^2.
\end{aligned}
\end{equation*}
By the definition of $G$ and $\phi(x)\equiv 1$ for $x\in B_{R/2}(\partial M)$, we indeed have
\begin{equation*}
\begin{aligned}
\sup\limits_{B_{R/2}(\partial M)}|\nabla h|^2(x)&=\sup\limits_{B_{R/2}(\partial M)}G(x)\\
&\le G(x_1)\\
&\le 49 L^2 u^{7/4}(x_1).
\end{aligned}
\end{equation*}
Since $h=u^{7/8}$, the above estimates gives that
\[
\sup\limits_{B_{R/2}(\partial M)}\frac{49}{64}u^{-1/4}(x)|\nabla u|^2\le 49 L^2\sup_{y\in B_R(\partial M)}u^{7/4}(y).
\]
Namely,
\[
\sup\limits_{B_{R/2}(\partial M)}|\nabla u|\le 8 L\sup_{y\in B_R(\partial M)}u(y)
\]
and the theorem follows.
\end{proof}


\end{document}